\newtheorem{theorem}{Theorem}
\theoremstyle{plain}
\newtheorem{corollary}{Corollary}
\newtheorem{lemma}{Lemma}
\newtheorem{remark}{Remark}
\numberwithin{equation}{section}
\begin{document}
\title[Integral inequalities]{New estimates on generalization of some
integral inequalities for $(\alpha ,m)-$convex functions}
\author{\.{I}mdat \.{I}\c{s}can}
\address{Department of Mathematics, Faculty of Arts and Sciences,\\
Giresun University, 28100, Giresun, Turkey.}
\email{imdat.iscan@giresun.edu.tr}
\date{July 01, 2012}
\subjclass[2000]{26A51, 26D15}
\keywords{convex function, $(\alpha ,m)-$convex function, Hermite-Hadamard's
inequality, Simpson type inequalities, trapezoid inequality, midpoint
inequality.}

\begin{abstract}
In this paper, we derive new estimates for the remainder term of the
midpoint, trapezoid, and Simpson formulae for functions whose derivatives in
absolute value at certain power are $(\alpha ,m)-$convex.
\end{abstract}

\maketitle

\section{Introduction}

Let $f:I\subseteq \mathbb{R\rightarrow R}$ be a convex function defined on
the interval $I$ of real numbers and $a,b\in I$ with $a<b$. The following
double inequality is well known in the literature as Hermite-Hadamard
integral inequality 
\begin{equation}
f\left( \frac{a+b}{2}\right) \leq \frac{1}{b-a}\dint\limits_{a}^{b}f(x)dx%
\leq \frac{f(a)+f(b)}{2}\text{.}  \label{1-1}
\end{equation}

The class of $(\alpha ,m)-$convex functions was first introduced In \cite%
{M93}, and it is defined as follows:

The function $f:\left[ 0,b\right] \mathbb{\rightarrow R}$, $b>0$, is said to
be $(\alpha ,m)$-convex where $(\alpha ,m)\in \left[ 0,1\right] ^{2}$, if we
have$f$%
\begin{equation*}
\left( tx+m(1-t)y\right) \leq t^{\alpha }f(x)+m(1-t^{\alpha })f(y)
\end{equation*}

for all $x,y\in \left[ 0,b\right] $ and $t\in \left[ 0,1\right] $.

It can be easily that for $(\alpha ,m)\in \left\{ (0,0),(\alpha
,0),(1,0),(1,m),(1,1),(\alpha ,1)\right\} $ one obtains the following
classes of functions: increasing, $\alpha $-starshaped, starshaped, $m$%
-convex, convex, $\alpha $-convex.

Denote by $K_{m}^{\alpha }(b)$ the set of all $(\alpha ,m)$-convex functions
on $\left[ 0,b\right] $ for which $f(0)\leq 0$. For recent results and
generalizations concerning $(\alpha ,m)$-convex functions (see \cite%
{BOP08,M93,OAK11,OKS10,OSS11,SSOR09}).

The following inequality is well known in the literature as Simpson's
inequality .

Let $f:\left[ a,b\right] \mathbb{\rightarrow R}$ be a four times
continuously differentiable mapping on $\left( a,b\right) $ and $\left\Vert
f^{(4)}\right\Vert _{\infty }=\underset{x\in \left( a,b\right) }{\sup }%
\left\vert f^{(4)}(x)\right\vert <\infty .$ Then the following inequality
holds:%
\begin{equation*}
\left\vert \frac{1}{3}\left[ \frac{f(a)+f(b)}{2}+2f\left( \frac{a+b}{2}%
\right) \right] -\frac{1}{b-a}\dint\limits_{a}^{b}f(x)dx\right\vert \leq 
\frac{1}{2880}\left\Vert f^{(4)}\right\Vert _{\infty }\left( b-a\right) ^{2}.
\end{equation*}%
\ \qquad In recent years many authors have studied error estimations for
Simpson's inequality; for refinements, counterparts, generalizations and new
Simpson's type inequalities, see \cite{P12,SA11,SSO10,SSO10a}.

In this paper, in order to provide a unified approach to establish midpoint
inequality, trapezoid inequality and Simpson's inequality for functions
whose derivatives in absolute value at certain power are $(\alpha ,m)$%
-convex, we derive a general integral identity for convex functions.

\section{Main results}

In order to generalize the classical Trapezoid, midpoint and Simpson type
inequalities and prove them, we need the following Lemma:

\begin{lemma}
\label{2.1}Let $f:I\subset \mathbb{R\rightarrow R}$ be a differentiable
mapping on $I^{\circ }$ such that $f^{\prime }\in L[ma,mb]$, where $m\in
\left( 0,1\right] $, $ma,mb\in I$ with $a<b$, then for $\theta ,\lambda \in %
\left[ 0,1\right] $ the following equality holds:%
\begin{equation*}
\left( 1-\theta \right) \left( \lambda f(ma)+\left( 1-\lambda \right)
f(mb)\right) +\theta f(\left( 1-\lambda \right) ma+\lambda mb)-\frac{1}{%
m\left( b-a\right) }\dint\limits_{ma}^{mb}f(x)dx
\end{equation*}%
\begin{eqnarray}
&=&m\left( b-a\right) \left[ -\lambda ^{2}\dint\limits_{0}^{1}\left(
t-\theta \right) f^{\prime }\left( tma+\left( 1-t\right) \left[ \left(
1-\lambda \right) ma+\lambda mb\right] \right) dt\right.  \notag \\
&&\left. +\left( 1-\lambda \right) ^{2}\dint\limits_{0}^{1}\left( t-\theta
\right) f^{\prime }\left( tmb+\left( 1-t\right) \left[ \left( 1-\lambda
\right) ma+\lambda mb\right] \right) dt\right] .  \notag
\end{eqnarray}
\end{lemma}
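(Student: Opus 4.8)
The plan is to prove this as an integral identity by computing each of the two integrals on the right-hand side directly, using integration by parts, and then assembling the results. This is the standard strategy for lemmas of this Ostrowski/Simpson type: the right-hand side is not something to be discovered, but something to be verified by reducing each integral to boundary terms plus a remaining integral that becomes the average $\frac{1}{m(b-a)}\int_{ma}^{mb}f(x)\,dx$ after a change of variables.

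First I would handle the first integral $I_1=\int_0^1 (t-\theta)\,f'\bigl(tma+(1-t)[(1-\lambda)ma+\lambda mb]\bigr)\,dt$. The key observation is that the argument of $f'$ is an affine function of $t$; writing $c=(1-\lambda)ma+\lambda mb$ for the interior point, the argument is $tma+(1-t)c = c + t(ma-c) = c - t\lambda m(b-a)$, since $ma-c = -\lambda m(b-a)$. Thus $\frac{d}{dt}f(\text{argument}) = -\lambda m(b-a)\,f'(\text{argument})$. Integrating by parts with $u=t-\theta$ and $dv = f'\,dt$, I would get boundary terms at $t=0$ and $t=1$ — producing values $f(ma)$ and $f(c)$ weighted by $(1-\theta)$ and $\theta$ respectively after dividing through by the chain-rule factor — and a leftover integral $\int_0^1 f(\text{argument})\,dt$. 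A linear change of variable converts this leftover integral into an integral of $f$ over the subinterval from $c$ to $ma$, i.e. a piece of $\int_{ma}^{mb}f$.

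Then I would repeat the identical computation for the second integral $I_2$, whose argument is $tmb+(1-t)c = c+t(mb-c) = c+t(1-\lambda)m(b-a)$; here the chain-rule factor is $+(1-\lambda)m(b-a)$, the boundary terms yield $f(mb)$ and $f(c)$ with weights $(1-\theta)$ and $\theta$, and the leftover integral covers the complementary subinterval from $c$ to $mb$. When I multiply $I_1$ by $-\lambda^2$ and $I_2$ by $(1-\lambda)^2$, multiply the whole bracket by $m(b-a)$, and add, the factors $\lambda^2$ and $(1-\lambda)^2$ cancel against the single powers of $\lambda$ and $(1-\lambda)$ coming from the chain-rule constants, leaving the clean weights $(1-\theta)(\lambda f(ma)+(1-\lambda)f(mb))$ and $\theta f(c)$ in the boundary terms. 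The two leftover integrals combine to cover the full interval $[ma,mb]$, reproducing $\frac{1}{m(b-a)}\int_{ma}^{mb}f(x)\,dx$ with the correct sign.

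The main obstacle is purely bookkeeping rather than conceptual: I must track the powers of $\lambda$ and $(1-\lambda)$ and the signs carefully so that the interior-point contributions $f(c)$ from the two integrals combine correctly with coefficient $\theta$ (the $f(c)$ terms from $I_1$ and $I_2$ must add to exactly $\theta f(c)$ after the $\lambda^2,(1-\lambda)^2$ scaling), and so that the two half-interval integrals tile $[ma,mb]$ without gap or overlap and with matching orientation. I would verify the algebra by checking a limiting case — for instance $\lambda=\tfrac12$ with $\theta=\tfrac13$ should recover a Simpson-type identity, and $\theta=0$ or $\theta=1$ the trapezoid or midpoint identities — which pins down any sign error immediately.
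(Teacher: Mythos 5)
Your proposal is correct and follows exactly the route the paper intends: the paper's own ``proof'' is precisely the one-sentence sketch of integrating by parts in each integral on the right-hand side and changing the variable, which your computation carries out in full (with the chain-rule factors $-\lambda m(b-a)$ and $(1-\lambda)m(b-a)$ producing the cancellation against $\lambda^{2}$ and $(1-\lambda)^{2}$, and the two half-interval integrals tiling $[ma,mb]$ as you describe). The only detail worth a remark is the degenerate cases $\lambda\in\{0,1\}$, where one divides by $\lambda$ or $1-\lambda$: there the corresponding term on the right vanishes and the identity is verified directly (or by continuity), so nothing is lost.
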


A simple proof of the equality can be done by performing an integration by
parts in the integrals from the right side and changing the variable. The
details are left to the interested reader.

\begin{theorem}
\label{2.2}Let $f:I\subseteq \lbrack 0,\infty )\rightarrow \mathbb{R}$ be a
differentiable mapping on $I^{\circ }$ such that $f^{\prime }\in L[ma,mb]$,
where $m\in \left( 0,1\right] $, $ma,mb\in I^{\circ }$ with $a<b$ and $%
\theta ,\lambda \in \left[ 0,1\right] $. If $\left\vert f^{\prime
}\right\vert ^{q}$ is $(\alpha ,m)-$convex on $[ma,mb]$, for $\alpha \in %
\left[ 0,1\right] $, $q\geq 1$ then the following inequality holds:%
\begin{eqnarray}
&&\left\vert \left( 1-\theta \right) \left( \lambda f(ma)+\left( 1-\lambda
\right) f(mb)\right) +\theta f(mC)-\frac{1}{m\left( b-a\right) }%
\dint\limits_{ma}^{mb}f(x)dx\right\vert   \notag \\
&\leq &m\left( b-a\right) A_{1}^{1-\frac{1}{q}}(\theta )\min \left\{
B_{1}\left( \theta ,\lambda ,\alpha ,q,m\right) ,B_{2}\left( \theta ,\lambda
,\alpha ,q,m\right) \right\} ,  \label{2-2}
\end{eqnarray}%
where%
\begin{eqnarray*}
B_{1}\left( \theta ,\lambda ,\alpha ,q,m\right)  &=&\left\{ \lambda
^{2}\left( \left\vert f^{\prime }(ma)\right\vert ^{q}A_{2}(\theta ,\alpha
)+m\left\vert f^{\prime }(C)\right\vert ^{q}A_{3}(\theta ,\alpha )\right) ^{%
\frac{1}{q}}\right.  \\
&&\left. +\left( 1-\lambda \right) ^{2}\left( \left\vert f^{\prime
}(mb)\right\vert ^{q}A_{2}(\theta ,\alpha )+m\left\vert f^{\prime
}(C)\right\vert ^{q}A_{3}(\theta ,\alpha )\right) ^{\frac{1}{q}}\right\} , \\
B_{2}\left( \theta ,\lambda ,\alpha ,q,m\right)  &=&\left\{ \lambda
^{2}\left( \left\vert f^{\prime }(mC)\right\vert ^{q}A_{4}(\theta ,\alpha
)+m\left\vert f^{\prime }(a)\right\vert ^{q}A_{5}(\theta ,\alpha )\right) ^{%
\frac{1}{q}}\right.  \\
&&\left. +\left( 1-\lambda \right) ^{2}\left( \left\vert f^{\prime
}(mC)\right\vert ^{q}A_{4}(\theta ,\alpha )+m\left\vert f^{\prime
}(b)\right\vert ^{q}A_{5}(\theta ,\alpha )\right) ^{\frac{1}{q}}\right\} ,
\end{eqnarray*}%
\begin{eqnarray*}
A_{1}(\theta ) &=&\theta ^{2}-\theta +\frac{1}{2}, \\
A_{2}(\theta ,\alpha ) &=&\frac{2\theta ^{\alpha +2}}{\left( \alpha
+1\right) \left( \alpha +2\right) }-\frac{\theta }{\alpha +1}+\frac{1}{%
\alpha +2}, \\
A_{3}(\theta ,\alpha ) &=&\theta ^{2}-\frac{2\theta ^{\alpha +2}}{\left(
\alpha +1\right) \left( \alpha +2\right) }-\frac{\alpha \theta }{\alpha +1}+%
\frac{\alpha }{2\left( \alpha +2\right) }, \\
A_{4}(\theta ,\alpha ) &=&\frac{2\left( 1-\theta \right) ^{\alpha +2}}{%
\left( \alpha +1\right) \left( \alpha +2\right) }-\frac{1-\theta }{\alpha +1}%
+\frac{1}{\alpha +2}, \\
A_{5}(\theta ,\alpha ) &=&\left( 1-\theta \right) ^{2}-\frac{2\left(
1-\theta \right) ^{\alpha +2}}{\left( \alpha +1\right) \left( \alpha
+2\right) }-\frac{\alpha \left( 1-\theta \right) }{\alpha +1}+\frac{\alpha }{%
2\left( \alpha +2\right) },
\end{eqnarray*}%
and $C=\left( 1-\lambda \right) a+\lambda b.$
\end{theorem}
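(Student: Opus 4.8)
The plan is to start from the identity in Lemma \ref{2.1}, observing that $(1-\lambda)ma+\lambda mb = mC$ with $C=(1-\lambda)a+\lambda b$, so the left-hand side of Lemma \ref{2.1} is precisely the quantity inside the absolute value in \eqref{2-2}. Taking absolute values and applying the triangle inequality to pass $\left\vert\cdot\right\vert$ inside each integral gives the upper bound
\[
m(b-a)\left[\lambda^{2}\int_{0}^{1}|t-\theta|\,\bigl|f^{\prime}(tma+(1-t)mC)\bigr|\,dt+(1-\lambda)^{2}\int_{0}^{1}|t-\theta|\,\bigl|f^{\prime}(tmb+(1-t)mC)\bigr|\,dt\right].
\]

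Next I would apply the power-mean inequality (Hölder with exponents $q/(q-1)$ and $q$, valid for $q\geq 1$) to each of the two integrals, writing $|t-\theta|=|t-\theta|^{1-1/q}|t-\theta|^{1/q}$. This factors out $\bigl(\int_{0}^{1}|t-\theta|\,dt\bigr)^{1-1/q}=A_{1}(\theta)^{1-1/q}$ from both terms, leaving integrals of the form $\int_{0}^{1}|t-\theta|\,|f^{\prime}(\cdot)|^{q}\,dt$ still to be estimated. The elementary evaluation $\int_{0}^{1}|t-\theta|\,dt=\theta^{2}-\theta+\tfrac{1}{2}$ follows by splitting the integral at $t=\theta$.

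The crux is then to bound each integral $\int_{0}^{1}|t-\theta|\,|f^{\prime}(\cdot)|^{q}\,dt$ via the $(\alpha,m)$-convexity of $|f^{\prime}|^{q}$, and here there are two natural decompositions of the argument, which is exactly what produces the minimum of $B_{1}$ and $B_{2}$. For $B_{1}$, I write $tma+(1-t)mC=t(ma)+m(1-t)C$ and apply the defining inequality with $x=ma$, $y=C$, giving $|f^{\prime}(ma)|^{q}t^{\alpha}+m|f^{\prime}(C)|^{q}(1-t^{\alpha})$; integrating against $|t-\theta|$ produces $A_{2}(\theta,\alpha)=\int_{0}^{1}|t-\theta|t^{\alpha}\,dt$ and $A_{3}(\theta,\alpha)=\int_{0}^{1}|t-\theta|(1-t^{\alpha})\,dt=A_{1}-A_{2}$. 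For $B_{2}$, I instead substitute $s=1-t$ and write $tma+(1-t)mC=s(mC)+m(1-s)a$, applying the inequality with $x=mC$, $y=a$; this yields $A_{4}(\theta,\alpha)=\int_{0}^{1}|t-\theta|(1-t)^{\alpha}\,dt$ and $A_{5}=A_{1}-A_{4}$. A change of variable shows $A_{4}(\theta,\alpha)=A_{2}(1-\theta,\alpha)$ and $A_{5}(\theta,\alpha)=A_{3}(1-\theta,\alpha)$, so the same split-at-$\theta$ integration (using $\int t^{\alpha}\,dt$ and $\int t^{\alpha+1}\,dt$) evaluates all four quantities to the stated closed forms. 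The analogous decompositions applied to the second integral supply the $|f^{\prime}(mb)|^{q},|f^{\prime}(C)|^{q}$ terms in $B_{1}$ and the $|f^{\prime}(mC)|^{q},|f^{\prime}(b)|^{q}$ terms in $B_{2}$.

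Assembling these pieces yields two valid upper bounds, $m(b-a)A_{1}^{1-1/q}B_{1}$ and $m(b-a)A_{1}^{1-1/q}B_{2}$; since both hold simultaneously, so does the estimate with $\min\{B_{1},B_{2}\}$, which is \eqref{2-2}. The main obstacle I anticipate is bookkeeping rather than conceptual: correctly matching each of the two argument decompositions to the rigid form $tx+m(1-t)y$ demanded by $(\alpha,m)$-convexity (in particular keeping track of which endpoint carries the factor $m$, so that $f^{\prime}(ma)$ versus $f^{\prime}(a)$ and $f^{\prime}(C)$ versus $f^{\prime}(mC)$ appear in the right places), and carrying out the four split-at-$\theta$ integrations without sign errors in the $\theta^{\alpha+2}$ and $(1-\theta)^{\alpha+2}$ terms.
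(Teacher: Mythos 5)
Your proposal is correct and follows essentially the same route as the paper: Lemma \ref{2.1}, the power mean inequality factoring out $A_{1}^{1-\frac{1}{q}}(\theta)$, then the two $(\alpha,m)$-convex decompositions of each argument (the direct one giving $B_{1}$, and the $t\mapsto 1-t$ reflection giving $B_{2}$), with the minimum taken at the end. Your observations that $A_{3}=A_{1}-A_{2}$, $A_{4}(\theta,\alpha)=A_{2}(1-\theta,\alpha)$, and $A_{5}(\theta,\alpha)=A_{3}(1-\theta,\alpha)$ are accurate and merely streamline the bookkeeping the paper carries out by direct computation.
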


\begin{proof}
Suppose that $C=\left( 1-\lambda \right) a+\lambda b.$ From Lemma \ref{2.1}
and using the properties of modulus and the well known power mean
inequality, we have%
\begin{eqnarray*}
&&\left( 1-\theta \right) \left( \lambda f(ma)+\left( 1-\lambda \right)
f(mb)\right) +\theta f(\left( 1-\lambda \right) ma+\lambda mb)-\frac{1}{%
m\left( b-a\right) }\dint\limits_{ma}^{mb}f(x)dx \\
&\leq &m\left( b-a\right) \left[ \lambda ^{2}\dint\limits_{0}^{1}\left\vert
t-\theta \right\vert \left\vert f^{\prime }\left( tma+\left( 1-t\right)
mC\right) \right\vert dt\right. \\
&&\left. +\left( 1-\lambda \right) ^{2}\dint\limits_{0}^{1}\left\vert
t-\theta \right\vert \left\vert f^{\prime }\left( tmb+\left( 1-t\right)
mC\right) \right\vert dt\right] \\
&\leq &m\left( b-a\right) \left\{ \lambda ^{2}\left(
\dint\limits_{0}^{1}\left\vert t-\theta \right\vert dt\right) ^{1-\frac{1}{q}%
}\left( \dint\limits_{0}^{1}\left\vert t-\theta \right\vert \left\vert
f^{\prime }\left( tma+\left( 1-t\right) mC\right) \right\vert ^{q}dt\right)
^{\frac{1}{q}}\right.
\end{eqnarray*}%
\begin{equation}
\left. +\left( 1-\lambda \right) ^{2}\left( \dint\limits_{0}^{1}\left\vert
t-\theta \right\vert dt\right) ^{1-\frac{1}{q}}\left(
\dint\limits_{0}^{1}\left\vert t-\theta \right\vert \left\vert f^{\prime
}\left( tmb+\left( 1-t\right) mC\right) \right\vert ^{q}dt\right) ^{\frac{1}{%
q}}\right\}  \label{2-2a}
\end{equation}%
Since $\left\vert f^{\prime }\right\vert ^{q}$ is $(\alpha ,m)-$convex on $%
[a,b],$ we know that for $t\in \left[ 0,1\right] $%
\begin{equation*}
\left\vert f^{\prime }\left( tma+\left( 1-t\right) mC\right) \right\vert
^{q}\leq t^{\alpha }\left\vert f^{\prime }(ma)\right\vert ^{q}+m(1-t^{\alpha
})\left\vert f^{\prime }(C)\right\vert ^{q},
\end{equation*}%
and 
\begin{equation*}
\left\vert f^{\prime }\left( tmb+\left( 1-t\right) mC\right) \right\vert
^{q}\leq t^{\alpha }\left\vert f^{\prime }(mb)\right\vert ^{q}+m(1-t^{\alpha
})\left\vert f^{\prime }(C)\right\vert ^{q}.
\end{equation*}%
Hence, by simple computation%
\begin{eqnarray}
&&\dint\limits_{0}^{1}\left\vert t-\theta \right\vert t^{\alpha }\left\vert
f^{\prime }(ma)\right\vert ^{q}+m(1-t^{\alpha })\left\vert f^{\prime
}(C)\right\vert ^{q}dt  \label{2-2b} \\
&=&\left\vert f^{\prime }(ma)\right\vert ^{q}A_{2}(\theta ,\alpha
)+m\left\vert f^{\prime }(C)\right\vert ^{q}A_{3}(\theta ,\alpha )  \notag
\end{eqnarray}%
\begin{eqnarray}
&&\dint\limits_{0}^{1}\left\vert t-\theta \right\vert t^{\alpha }\left\vert
f^{\prime }(mb)\right\vert ^{q}+m(1-t^{\alpha })\left\vert f^{\prime
}(C)\right\vert ^{q}dt  \label{2-2c} \\
&=&\left\vert f^{\prime }(mb)\right\vert ^{q}A_{2}(\theta ,\alpha
)+m\left\vert f^{\prime }(C)\right\vert ^{q}A_{3}(\theta ,\alpha )  \notag
\end{eqnarray}%
and%
\begin{equation}
\dint\limits_{0}^{1}\left\vert t-\theta \right\vert dt=\theta ^{2}-\theta +%
\frac{1}{2}.  \label{2-2d}
\end{equation}%
Thus, using (\ref{2-2b})-(\ref{2-2d}) in (\ref{2-2a}), we obtain the
following inequality 
\begin{eqnarray}
&&\left\vert \left( 1-\theta \right) \left( \lambda f(ma)+\left( 1-\lambda
\right) f(mb)\right) +\theta f(mC)-\frac{1}{m\left( b-a\right) }%
\dint\limits_{ma}^{mb}f(x)dx\right\vert  \notag \\
&\leq &m\left( b-a\right) A_{1}^{1-\frac{1}{q}}(\theta )\left\{ \left(
\left\vert f^{\prime }(ma)\right\vert ^{q}A_{2}(\theta ,\alpha )+m\left\vert
f^{\prime }(C)\right\vert ^{q}A_{3}(\theta ,\alpha )\right) ^{\frac{1}{q}%
}\right.  \notag \\
&&\left. +\left( \left\vert f^{\prime }(mb)\right\vert ^{q}A_{2}(\theta
,\alpha )+m\left\vert f^{\prime }(C)\right\vert ^{q}A_{3}(\theta ,\alpha
)\right) ^{\frac{1}{q}}\right\} .  \label{2-2e}
\end{eqnarray}

In the inequality (\ref{2-2a}), if we use equalities 
\begin{equation*}
\dint\limits_{0}^{1}\left\vert t-\theta \right\vert \left\vert f^{\prime
}\left( tma+\left( 1-t\right) mC\right) \right\vert
^{q}dt=\dint\limits_{0}^{1}\left\vert 1-\theta -t\right\vert \left\vert
f^{\prime }\left( tmC+\left( 1-t\right) ma\right) \right\vert ^{q}dt
\end{equation*}%
and%
\begin{equation*}
\dint\limits_{0}^{1}\left\vert t-\theta \right\vert \left\vert f^{\prime
}\left( tmb+\left( 1-t\right) mC\right) \right\vert
^{q}dt=\dint\limits_{0}^{1}\left\vert 1-\theta -t\right\vert \left\vert
f^{\prime }\left( tmC+\left( 1-t\right) mb\right) \right\vert ^{q}dt,
\end{equation*}%
by similar process, since $\left\vert f^{\prime }\right\vert ^{q}$ is $%
(\alpha ,m)-$convex on $[a,b],$ for $t\in \left[ 0,1\right] $%
\begin{equation*}
\left\vert f^{\prime }\left( tmC+\left( 1-t\right) ma\right) \right\vert
^{q}\leq t^{\alpha }\left\vert f^{\prime }(mC)\right\vert ^{q}+m(1-t^{\alpha
})\left\vert f^{\prime }(a)\right\vert ^{q}
\end{equation*}%
and%
\begin{equation*}
\left\vert f^{\prime }\left( tmC+\left( 1-t\right) mb\right) \right\vert
^{q}\leq t^{\alpha }\left\vert f^{\prime }(mC)\right\vert ^{q}+m(1-t^{\alpha
})\left\vert f^{\prime }(b)\right\vert ^{q}.
\end{equation*}%
Similarly, by simple computation%
\begin{eqnarray}
&&\dint\limits_{0}^{1}\left\vert 1-\theta -t\right\vert t^{\alpha
}\left\vert f^{\prime }(mC)\right\vert ^{q}+m(1-t^{\alpha })\left\vert
f^{\prime }(a)\right\vert ^{q}dt  \label{2-2f} \\
&=&\left\vert f^{\prime }(mC)\right\vert ^{q}A_{4}(\theta ,\alpha
)+m\left\vert f^{\prime }(a)\right\vert ^{q}A_{5}(\theta ,\alpha )  \notag
\end{eqnarray}%
\begin{eqnarray}
&&\dint\limits_{0}^{1}\left\vert 1-\theta -t\right\vert t^{\alpha
}\left\vert f^{\prime }(mC)\right\vert ^{q}+m(1-t^{\alpha })\left\vert
f^{\prime }(b)\right\vert ^{q}dt  \label{2-2g} \\
&=&\left\vert f^{\prime }(mC)\right\vert ^{q}A_{4}(\theta ,\alpha
)+m\left\vert f^{\prime }(b)\right\vert ^{q}A_{5}(\theta ,\alpha )  \notag
\end{eqnarray}%
Thus, using (\ref{2-2b}),(\ref{2-2f}) and (\ref{2-2g}) in (\ref{2-2a}), we
have the following inequality%
\begin{eqnarray}
&&\left\vert \left( 1-\theta \right) \left( \lambda f(ma)+\left( 1-\lambda
\right) f(mb)\right) +\theta f(mC)-\frac{1}{m\left( b-a\right) }%
\dint\limits_{ma}^{mb}f(x)dx\right\vert  \notag \\
&\leq &m\left( b-a\right) A_{1}^{1-\frac{1}{q}}(\theta )\left\{ \left(
\left\vert f^{\prime }(mC)\right\vert ^{q}A_{4}(\theta ,\alpha )+m\left\vert
f^{\prime }(a)\right\vert ^{q}A_{5}(\theta ,\alpha )\right) ^{\frac{1}{q}%
}\right.  \notag \\
&&\left. +\left( \left\vert f^{\prime }(mC)\right\vert ^{q}A_{4}(\theta
,\alpha )+m\left\vert f^{\prime }(b)\right\vert ^{q}A_{5}(\theta ,\alpha
)\right) ^{\frac{1}{q}}\right\} .  \label{2-2h}
\end{eqnarray}

From the inequalities (\ref{2-2e}) and (\ref{2-2h}) \ the inequality (\ref%
{2-2}) is obtained. This completes the proof.
\end{proof}

\begin{corollary}
\label{2.3}Under the assumptions of Theorem \ref{2.2} with $q=1$%
\begin{eqnarray*}
&&\left\vert \left( 1-\theta \right) \left( \lambda f(ma)+\left( 1-\lambda
\right) f(mb)\right) +\theta f(mC)-\frac{1}{m\left( b-a\right) }%
\dint\limits_{ma}^{mb}f(x)dx\right\vert  \\
&\leq &m\left( b-a\right) \min \left\{ B_{1}\left( \theta ,\lambda ,\alpha
,1,m\right) ,B_{2}\left( \theta ,\lambda ,\alpha ,1,m\right) \right\} .
\end{eqnarray*}
\end{corollary}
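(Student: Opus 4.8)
The plan is to read this inequality off directly as the special case $q=1$ of Theorem \ref{2.2}, with no new work beyond a single substitution. Setting $q=1$ in (\ref{2-2}), the only expression that requires a moment's attention is the leading factor $A_{1}^{1-\frac{1}{q}}(\theta)$. When $q=1$ the exponent $1-\frac{1}{q}$ equals $0$, so this factor becomes $A_{1}^{0}(\theta)$. Since $A_{1}(\theta)=\theta^{2}-\theta+\frac{1}{2}=\left(\theta-\frac{1}{2}\right)^{2}+\frac{1}{4}\geq\frac{1}{4}>0$ for every $\theta\in\left[0,1\right]$, the base is strictly positive and $A_{1}^{0}(\theta)=1$ is unambiguously defined. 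Hence the constant $m(b-a)A_{1}^{1-\frac{1}{q}}(\theta)$ in front of the minimum collapses to $m(b-a)$.

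Next I would record the two auxiliary quantities at $q=1$. The inner exponents $\frac{1}{q}$ in $B_{1}$ and $B_{2}$ also become $1$, so $B_{1}\left(\theta,\lambda,\alpha,1,m\right)$ and $B_{2}\left(\theta,\lambda,\alpha,1,m\right)$ are exactly the bracketed sums of Theorem \ref{2.2} with the $q$-th roots removed. Substituting $A_{1}^{0}(\theta)=1$ together with these specialized expressions into (\ref{2-2}) produces precisely the asserted bound $m(b-a)\min\left\{B_{1}\left(\theta,\lambda,\alpha,1,m\right),B_{2}\left(\theta,\lambda,\alpha,1,m\right)\right\}$, which is the claim.

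There is essentially \emph{no obstacle} here, since the statement is a direct specialization of an already established theorem; the only thing worth flagging is the harmless evaluation $A_{1}^{0}(\theta)=1$. If one preferred a self-contained argument, one could instead return to Lemma \ref{2.1} and the estimate (\ref{2-2a}): with $q=1$ the power-mean step is vacuous, because the factor $\left(\int_{0}^{1}\left\vert t-\theta\right\vert\,dt\right)^{1-\frac{1}{q}}$ equals $1$, and then the two $(\alpha,m)$-convexity bounds on $\left\vert f^{\prime}\right\vert$ combined with the integral evaluations (\ref{2-2b})--(\ref{2-2g}) yield the estimates governed by $B_{1}$ and $B_{2}$ separately, so taking the smaller of the two gives the minimum. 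Either route is routine computation.
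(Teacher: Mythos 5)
Your proposal is correct and coincides with the paper's (implicit) argument: the paper states Corollary \ref{2.3} without proof precisely because it is the direct specialization $q=1$ of Theorem \ref{2.2}, where $A_{1}^{1-\frac{1}{q}}(\theta)=A_{1}^{0}(\theta)=1$ (well defined since $A_{1}(\theta)=\left(\theta-\frac{1}{2}\right)^{2}+\frac{1}{4}>0$) and the inner exponents $\frac{1}{q}$ collapse to $1$. Your observation that the power-mean step becomes vacuous at $q=1$ is a correct sanity check but adds nothing beyond the substitution.
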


\begin{corollary}
\label{2.3a}Under the assumptions of Theorem \ref{2.2} with $\lambda =\frac{1%
}{2}$ and $\theta =\frac{2}{3}$, we have%
\begin{eqnarray*}
&&\left\vert \frac{1}{6}\left[ f(ma)+4f\left( \frac{ma+mb}{2}\right) +f(mb)%
\right] -\frac{1}{m\left( b-a\right) }\dint\limits_{ma}^{mb}f(x)dx\right%
\vert  \\
&\leq &m\left( b-a\right) \left( \frac{5}{18}\right) ^{1-\frac{1}{q}}\min
\left\{ B_{1}\left( \frac{2}{3},\frac{1}{2},\alpha ,q,m\right) ,B_{2}\left( 
\frac{2}{3},\frac{1}{2},\alpha ,q,m\right) \right\} .
\end{eqnarray*}
\end{corollary}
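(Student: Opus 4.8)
The plan is to obtain Corollary \ref{2.3a} as a direct specialization of Theorem \ref{2.2}, simply substituting the parameter values $\lambda =\frac{1}{2}$ and $\theta =\frac{2}{3}$ into inequality (\ref{2-2}) and simplifying both sides. There is no new analytic content; the work is entirely verification of the arithmetic that collapses the general estimate into the Simpson-type form.

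First I would compute the point $C$. Since $C=\left( 1-\lambda \right) a+\lambda b$, setting $\lambda =\frac{1}{2}$ gives $C=\frac{a+b}{2}$, hence $mC=\frac{ma+mb}{2}$, which is precisely the interior evaluation point appearing in the corollary. Next I would simplify the left-hand side of (\ref{2-2}). With $\theta =\frac{2}{3}$ we have $1-\theta =\frac{1}{3}$, so the bracketed boundary term becomes $\frac{1}{3}\left( \frac{1}{2}f(ma)+\frac{1}{2}f(mb)\right) =\frac{1}{6}f(ma)+\frac{1}{6}f(mb)$, while the interior term is $\frac{2}{3}f(mC)=\frac{4}{6}f\left( \frac{ma+mb}{2}\right) $. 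Combining these yields exactly $\frac{1}{6}\left[ f(ma)+4f\left( \frac{ma+mb}{2}\right) +f(mb)\right] $, matching the expression in the statement.

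Then I would evaluate the prefactor $A_{1}^{1-\frac{1}{q}}(\theta )$. Using $A_{1}(\theta )=\theta ^{2}-\theta +\frac{1}{2}$ at $\theta =\frac{2}{3}$ gives $A_{1}\left( \frac{2}{3}\right) =\frac{4}{9}-\frac{2}{3}+\frac{1}{2}=\frac{5}{18}$, so the factor $m\left( b-a\right) A_{1}^{1-\frac{1}{q}}(\theta )$ becomes $m\left( b-a\right) \left( \frac{5}{18}\right) ^{1-\frac{1}{q}}$, as required. The terms $B_{1}$ and $B_{2}$ carry over verbatim with the substituted arguments $\left( \frac{2}{3},\frac{1}{2},\alpha ,q,m\right) $, since they are defined as functions of exactly these parameters in Theorem \ref{2.2}; in particular the coefficients $\lambda ^{2}=\left( 1-\lambda \right) ^{2}=\frac{1}{4}$ are already absorbed into their definitions, so no further manipulation of the $A_{2},\dots ,A_{5}$ quantities is needed. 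Assembling these pieces reproduces the claimed inequality, completing the proof.

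I do not anticipate any genuine obstacle here: the only thing requiring care is the bookkeeping in the algebraic simplification of the left-hand side into the standard Simpson weights $\frac{1}{6},\frac{4}{6},\frac{1}{6}$, and the evaluation $A_{1}\left( \frac{2}{3}\right) =\frac{5}{18}$, both of which are elementary.
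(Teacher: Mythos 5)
Your proposal is correct and coincides with the paper's (implicit) argument: the corollary is stated as a direct specialization of Theorem \ref{2.2}, and your computations --- $C=\frac{a+b}{2}$, the Simpson weights $\frac{1}{6},\frac{4}{6},\frac{1}{6}$ from $\theta=\frac{2}{3}$, $\lambda=\frac{1}{2}$, and $A_{1}\left(\frac{2}{3}\right)=\frac{4}{9}-\frac{2}{3}+\frac{1}{2}=\frac{5}{18}$ --- are exactly the verification the paper leaves to the reader.
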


\begin{remark}
In Corollary \ref{2.3a}, if we take $\alpha =m=1,$ we obtain the following
inequality%
\begin{eqnarray*}
&&\left\vert \frac{1}{6}\left[ f(a)+4f\left( \frac{a+b}{2}\right) +f(b)%
\right] -\frac{1}{b-a}\dint\limits_{a}^{b}f(x)dx\right\vert  \\
&\leq &\left( b-a\right) A_{1}^{1-\frac{1}{q}}(\theta )\min \left\{
B_{1}\left( \frac{2}{3},\frac{1}{2},1,q,1\right) ,B_{2}\left( \frac{2}{3},%
\frac{1}{2},1,q,1\right) \right\}  \\
&\leq &\left( b-a\right) A_{1}^{1-\frac{1}{q}}(\theta )B_{2}\left( \frac{2}{3%
},\frac{1}{2},1,q,1\right)  \\
&\leq &\left( b-a\right) \left( \frac{5}{72}\right) ^{1-\frac{1}{q}}\left\{
\left( \frac{29}{648}\left\vert f^{\prime }(\frac{a+b}{2})\right\vert ^{q}+%
\frac{2}{81}\left\vert f^{\prime }(a)\right\vert ^{q}\right) ^{\frac{1}{q}%
}\right.  \\
&&\left. +\left( \frac{29}{648}\left\vert f^{\prime }(\frac{a+b}{2}%
)\right\vert ^{q}+\frac{2}{81}\left\vert f^{\prime }(b)\right\vert
^{q}\right) ^{\frac{1}{q}}\right\} ,
\end{eqnarray*}%
which is the better than the inequality in \cite[Theorem 10]{SSO10} for $s=1$
.
\end{remark}

\begin{corollary}
Under the assumptions of Theorem \ref{2.2} with $\lambda =\frac{1}{2}$ and $%
\theta =0$, we have%
\begin{eqnarray*}
&&\left\vert \frac{f(ma)+f(mb)}{2}-\frac{1}{m\left( b-a\right) }%
\dint\limits_{ma}^{mb}f(x)dx\right\vert  \\
&\leq &m\left( b-a\right) \left( \frac{1}{2}\right) ^{1-\frac{1}{q}}\min
\left\{ B_{1}\left( 0,\frac{1}{2},\alpha ,q,m\right) ,B_{2}\left( 0,\frac{1}{%
2},\alpha ,q,m\right) \right\} ,
\end{eqnarray*}
\end{corollary}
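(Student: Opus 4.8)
The plan is to obtain this corollary as a direct specialization of Theorem \ref{2.2}. First I would set $\lambda = \frac{1}{2}$ and $\theta = 0$ in the inequality (\ref{2-2}) and track how each piece collapses. With $\lambda = \frac{1}{2}$ the point $C = (1-\lambda)a + \lambda b$ becomes $C = \frac{a+b}{2}$, and the weighted combination $\lambda f(ma) + (1-\lambda)f(mb)$ becomes the symmetric average $\frac{f(ma)+f(mb)}{2}$.

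Next I would handle the $\theta$-dependent factors. Setting $\theta = 0$ annihilates the term $\theta f(mC)$ entirely and leaves the prefactor $(1-\theta) = 1$, so the left-hand side of (\ref{2-2}) reduces to $\left\vert \frac{f(ma)+f(mb)}{2} - \frac{1}{m(b-a)}\int_{ma}^{mb}f(x)\,dx\right\vert$, which is exactly the trapezoid-type expression in the statement. The only genuine computation is to evaluate $A_1(\theta) = \theta^2 - \theta + \frac{1}{2}$ at $\theta = 0$, giving $A_1(0) = \frac{1}{2}$, so that the constant $A_1^{1-1/q}(\theta)$ becomes $\left(\frac{1}{2}\right)^{1-1/q}$.

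Finally I would simply carry the constants $B_1$ and $B_2$ across with their arguments specialized to $\left(0, \frac{1}{2}, \alpha, q, m\right)$, since the corollary leaves them in unexpanded form and no further simplification of the inner $A_2,\dots,A_5$ is requested. Collecting these substitutions into (\ref{2-2}) yields the claimed inequality verbatim. I expect no real obstacle here: the result is an immediate consequence of Theorem \ref{2.2}, and the entire content of the proof is the bookkeeping of substituting $\lambda = \frac{1}{2}$, $\theta = 0$ and noting $A_1(0) = \frac{1}{2}$.
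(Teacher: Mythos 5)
Your proposal is correct and matches the paper's (implicit) proof exactly: the corollary is stated as an immediate specialization of Theorem \ref{2.2}, and your substitutions $\lambda=\frac{1}{2}$, $\theta=0$, $C=\frac{a+b}{2}$, together with $A_{1}(0)=\frac{1}{2}$, are precisely the required bookkeeping. Nothing further is needed.
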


\begin{corollary}
Under the assumptions of Theorem \ref{2.2} with $\lambda =\frac{1}{2}$ and $%
\theta =1$, we have%
\begin{eqnarray*}
&&\left\vert f\left( \frac{ma+mb}{2}\right) -\frac{1}{m\left( b-a\right) }%
\dint\limits_{ma}^{mb}f(x)dx\right\vert  \\
&\leq &m\left( b-a\right) \left( \frac{1}{2}\right) ^{1-\frac{1}{q}}\min
\left\{ B_{1}\left( 1,\frac{1}{2},\alpha ,q,m\right) ,B_{2}\left( 1,\frac{1}{%
2},\alpha ,q,m\right) \right\} 
\end{eqnarray*}
\end{corollary}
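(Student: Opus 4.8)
The plan is to obtain this corollary as an immediate specialization of Theorem \ref{2.2}, rather than repeating the power-mean argument from scratch. The only ingredients I need are the general inequality (\ref{2-2}) together with the explicit form of the constant $A_{1}(\theta)$. Since the chosen values $\theta=1$ and $\lambda=\tfrac12$ both lie in $[0,1]$, the hypotheses of Theorem \ref{2.2} hold verbatim, so (\ref{2-2}) may be applied directly after these substitutions.

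First I would simplify the left-hand side. Taking $\lambda=\tfrac12$ gives $C=(1-\lambda)a+\lambda b=\tfrac{a+b}{2}$, hence $mC=\tfrac{ma+mb}{2}$. Taking $\theta=1$ makes the factor $(1-\theta)$ vanish, so the weighted endpoint term $(1-\theta)\bigl(\lambda f(ma)+(1-\lambda)f(mb)\bigr)$ drops out entirely, while $\theta f(mC)=f\!\left(\tfrac{ma+mb}{2}\right)$. Consequently the left side of (\ref{2-2}) collapses exactly to the midpoint expression
\[
\left| f\!\left(\frac{ma+mb}{2}\right)-\frac{1}{m(b-a)}\int_{ma}^{mb} f(x)\,dx\right|,
\]
which is the quantity appearing in the statement.

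Next I would evaluate the prefactor. From $A_{1}(\theta)=\theta^{2}-\theta+\tfrac12$ one gets $A_{1}(1)=1-1+\tfrac12=\tfrac12$, so $A_{1}^{1-\frac{1}{q}}(\theta)$ becomes $\left(\tfrac12\right)^{1-\frac{1}{q}}$. The minimum term $\min\{B_{1},B_{2}\}$ in (\ref{2-2}) carries over unchanged apart from inserting the parameter values, giving $\min\bigl\{B_{1}(1,\tfrac12,\alpha,q,m),B_{2}(1,\tfrac12,\alpha,q,m)\bigr\}$. Combining the simplified left side with this right side reproduces the asserted inequality.

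Since every step is a direct substitution into an already-proved estimate, I do not anticipate a genuine obstacle; the only points requiring care are the correct evaluation $A_{1}(1)=\tfrac12$ and the bookkeeping observation that the choice $\theta=1$ annihilates the boundary contribution while retaining the midpoint value $f\!\left(\tfrac{ma+mb}{2}\right)$. One could optionally remark that this exhibits Theorem \ref{2.2} as a genuine midpoint estimate in this limiting case, paralleling the trapezoid-type case $\theta=0$ treated in the preceding corollary.
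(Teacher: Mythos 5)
Your proposal is correct and coincides with the paper's intended argument: the corollary is stated there without proof precisely because it is the direct specialization $\theta =1$, $\lambda =\frac{1}{2}$ of Theorem \ref{2.2}, and your computations $A_{1}(1)=\frac{1}{2}$ and $mC=\frac{ma+mb}{2}$, together with the vanishing of the $(1-\theta )$ term, are exactly the required bookkeeping.
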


\begin{theorem}
\label{2.4}Let $f:I\subseteq \lbrack 0,\infty )\rightarrow \mathbb{R}$ be a
differentiable mapping on $I^{\circ }$ such that $f^{\prime }\in L[ma,mb]$,
where $m\in \left( 0,1\right] $, $ma,mb\in I^{\circ }$ with $a<b$ and $%
\theta ,\lambda \in \left[ 0,1\right] $. If $\left\vert f^{\prime
}\right\vert ^{q}$ is $(\alpha ,m)-$convex on $[ma,mb]$, for $\alpha \in %
\left[ 0,1\right] $, $q>1$ then the following inequality holds:%
\begin{eqnarray*}
&&\left\vert \left( 1-\theta \right) \left( \lambda f(ma)+\left( 1-\lambda
\right) f(mb)\right) +\theta f(mC)-\frac{1}{m\left( b-a\right) }%
\dint\limits_{ma}^{mb}f(x)dx\right\vert \\
&\leq &m\left( b-a\right) \left( \frac{\theta ^{p+1}+\left( 1-\theta \right)
^{p+1}}{p+1}\right) ^{\frac{1}{p}}\min \left\{ B_{3}\left( \lambda ,\alpha
,q\right) ,B_{4}\left( \lambda ,\alpha ,q\right) \right\} ,
\end{eqnarray*}%
where 
\begin{equation*}
B_{3}\left( \lambda ,\alpha ,q\right) =\left\{ \lambda ^{2}E_{1}^{\frac{1}{q}%
}(\lambda ,\alpha )+\left( 1-\lambda \right) ^{2}E_{2}^{\frac{1}{q}}(\lambda
,\alpha )\right\} ,
\end{equation*}%
\begin{equation*}
B_{4}\left( \lambda ,\alpha ,q\right) =\left\{ \lambda ^{2}E_{3}^{\frac{1}{q}%
}(\lambda ,\alpha )+\left( 1-\lambda \right) ^{2}E_{4}^{\frac{1}{q}}(\lambda
,\alpha )\right\} ,
\end{equation*}%
\begin{eqnarray*}
E_{1}(\lambda ,\alpha ) &=&\frac{\left\vert f^{\prime }\left( ma\right)
\right\vert ^{q}+\alpha m\left\vert f^{\prime }\left( C\right) \right\vert
^{q}}{\alpha +1}, \\
E_{2}(\lambda ,\alpha ) &=&\frac{\left\vert f^{\prime }\left( mb\right)
\right\vert ^{q}+\alpha m\left\vert f^{\prime }\left( C\right) \right\vert
^{q}}{\alpha +1}, \\
E_{3}(\lambda ,\alpha ) &=&\frac{\left\vert f^{\prime }\left( mC\right)
\right\vert ^{q}+\alpha m\left\vert f^{\prime }\left( a\right) \right\vert
^{q}}{\alpha +1}, \\
E_{4}(\lambda ,\alpha ) &=&\frac{\left\vert f^{\prime }\left( mC\right)
\right\vert ^{q}+\alpha m\left\vert f^{\prime }\left( b\right) \right\vert
^{q}}{\alpha +1},
\end{eqnarray*}%
$C=\left( 1-\lambda \right) a+\lambda b$ and $\frac{1}{p}+\frac{1}{q}=1.$
\end{theorem}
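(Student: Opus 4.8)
The plan is to mirror the argument of Theorem \ref{2.2}, replacing the power-mean inequality by H\"older's inequality. Starting from the identity in Lemma \ref{2.1} and applying the triangle inequality for integrals, one bounds the left-hand side by
\[
m(b-a)\left[\lambda^2\int_0^1|t-\theta|\,\bigl|f'(tma+(1-t)mC)\bigr|\,dt+(1-\lambda)^2\int_0^1|t-\theta|\,\bigl|f'(tmb+(1-t)mC)\bigr|\,dt\right].
\]
To each of these two integrals I would apply H\"older's inequality with exponents $p$ and $q$, $\frac1p+\frac1q=1$, pulling the factor $|t-\theta|$ entirely into the $p$-integral. This is the decisive structural difference from Theorem \ref{2.2}: whereas the power-mean inequality leaves the weight $|t-\theta|$ inside the $q$-th power integral (producing the $\theta$-dependent constants $A_2,\dots,A_5$), H\"older's inequality separates it off completely, so the remaining $q$-power integrals are unweighted and the resulting constants $E_1,\dots,E_4$ depend only on $\lambda$ and $\alpha$, as stated.

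The first factor is computed once and for all by splitting the integral at $t=\theta$:
\[
\int_0^1|t-\theta|^p\,dt=\int_0^\theta(\theta-t)^p\,dt+\int_\theta^1(t-\theta)^p\,dt=\frac{\theta^{p+1}+(1-\theta)^{p+1}}{p+1},
\]
which yields the prefactor $\left(\frac{\theta^{p+1}+(1-\theta)^{p+1}}{p+1}\right)^{1/p}$. For the second factor I would invoke the $(\alpha,m)$-convexity of $|f'|^q$ exactly as in Theorem \ref{2.2}, namely $|f'(tma+(1-t)mC)|^q\le t^\alpha|f'(ma)|^q+m(1-t^\alpha)|f'(C)|^q$, and then integrate term by term using $\int_0^1 t^\alpha\,dt=\frac1{\alpha+1}$ and $\int_0^1(1-t^\alpha)\,dt=\frac{\alpha}{\alpha+1}$. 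This gives $\int_0^1|f'(tma+(1-t)mC)|^q\,dt\le E_1(\lambda,\alpha)$, and analogously the second integral is bounded by $E_2(\lambda,\alpha)$. Collecting the $\lambda^2$ and $(1-\lambda)^2$ coefficients produces the bound with $B_3(\lambda,\alpha,q)$.

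To obtain the competing bound with $B_4$, I would use the same two change-of-variable identities employed in the proof of Theorem \ref{2.2}, rewriting the integrands as $|f'(tmC+(1-t)ma)|^q$ and $|f'(tmC+(1-t)mb)|^q$, with $|t-\theta|$ replaced by $|1-\theta-t|$ in the $p$-integral (whose value over $[0,1]$ is unchanged by the substitution). Applying $(\alpha,m)$-convexity in this orientation gives $\int_0^1|f'(tmC+(1-t)ma)|^q\,dt\le E_3(\lambda,\alpha)$ and $\int_0^1|f'(tmC+(1-t)mb)|^q\,dt\le E_4(\lambda,\alpha)$, hence the bound with $B_4(\lambda,\alpha,q)$. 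Taking the minimum of the two bounds completes the argument. I anticipate no genuine obstacle: the only points requiring care are the hypothesis $q>1$ (needed so that the conjugate exponent $p$ is finite and H\"older applies) and the bookkeeping that guarantees the weight $|t-\theta|$ is fully extracted into the $p$-integral, so that $E_1,\dots,E_4$ are indeed free of $\theta$.
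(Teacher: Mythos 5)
Your proposal is correct and follows essentially the same route as the paper's own proof: apply H\"older's inequality to the identity from Lemma \ref{2.1} with the weight $\left\vert t-\theta \right\vert$ extracted into the $p$-integral, evaluate $\dint\nolimits_{0}^{1}\left\vert t-\theta \right\vert ^{p}dt=\frac{\theta ^{p+1}+\left( 1-\theta \right) ^{p+1}}{p+1}$, bound the unweighted $q$-integrals via $(\alpha ,m)$-convexity to get $E_{1},E_{2}$ for $B_{3}$, and use the same reversal-of-orientation substitutions to get $E_{3},E_{4}$ for $B_{4}$ before taking the minimum. Your observation that H\"older (unlike the power-mean step in Theorem \ref{2.2}) frees the constants $E_{1},\dots ,E_{4}$ of any $\theta$-dependence matches the structure of the paper's argument exactly.
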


\begin{proof}
Suppose that $C=\left( 1-\lambda \right) a+\lambda b.$ From Lemma \ref{2.1}
and by H\"{o}lder's integral inequality, we have%
\begin{eqnarray*}
&&\left\vert \left( 1-\theta \right) \left( \lambda f(ma)+\left( 1-\lambda
\right) f(mb)\right) +\theta f(mC)-\frac{1}{m\left( b-a\right) }%
\dint\limits_{ma}^{mb}f(x)dx\right\vert \leq m\left( b-a\right) \\
&&\left[ \lambda ^{2}\dint\limits_{0}^{1}\left\vert t-\theta \right\vert
\left\vert f^{\prime }\left( tma+m\left( 1-t\right) C\right) \right\vert
dt+\left( 1-\lambda \right) ^{2}\dint\limits_{0}^{1}\left\vert t-\theta
\right\vert \left\vert f^{\prime }\left( tmb+m\left( 1-t\right) C\right)
\right\vert dt\right] \\
&\leq &m\left( b-a\right) \left\{ \lambda ^{2}\left(
\dint\limits_{0}^{1}\left\vert t-\theta \right\vert ^{p}dt\right) ^{\frac{1}{%
p}}\left( \dint\limits_{0}^{1}\left\vert f^{\prime }\left( tma+m\left(
1-t\right) C\right) \right\vert ^{q}dt\right) ^{\frac{1}{q}}\right.
\end{eqnarray*}%
\begin{equation}
\left. +\left( 1-\lambda \right) ^{2}\left( \dint\limits_{0}^{1}\left\vert
t-\theta \right\vert ^{p}dt\right) ^{\frac{1}{p}}\left(
\dint\limits_{0}^{1}\left\vert f^{\prime }\left( tmb+m\left( 1-t\right)
C\right) \right\vert ^{q}dt\right) ^{\frac{1}{q}}\right\} .  \label{2-3a}
\end{equation}%
Since $\left\vert f^{\prime }\right\vert ^{q}$ is $(\alpha ,m)-$convex on $%
[a,b],$ we know that for $t\in \left[ 0,1\right] $%
\begin{equation*}
\left\vert f^{\prime }\left( tma+m\left( 1-t\right) C\right) \right\vert
^{q}\leq t^{\alpha }\left\vert f^{\prime }(ma)\right\vert ^{q}+m(1-t^{\alpha
})\left\vert f^{\prime }(C)\right\vert ^{q},
\end{equation*}%
and 
\begin{equation*}
\left\vert f^{\prime }\left( tmb+m\left( 1-t\right) C\right) \right\vert
^{q}\leq t^{\alpha }\left\vert f^{\prime }(mb)\right\vert ^{q}+m(1-t^{\alpha
})\left\vert f^{\prime }(C)\right\vert ^{q}.
\end{equation*}%
Hence, by simple computation%
\begin{equation}
\dint\limits_{0}^{1}t^{\alpha }\left\vert f^{\prime }(ma)\right\vert
^{q}+m(1-t^{\alpha })\left\vert f^{\prime }(C)\right\vert ^{q}dt=\frac{%
\left\vert f^{\prime }\left( ma\right) \right\vert ^{q}+\alpha m\left\vert
f^{\prime }\left( C\right) \right\vert ^{q}}{\alpha +1},  \label{2-3b}
\end{equation}%
\begin{equation}
\dint\limits_{0}^{1}t^{\alpha }\left\vert f^{\prime }(mb)\right\vert
^{q}+m(1-t^{\alpha })\left\vert f^{\prime }(C)\right\vert ^{q}dt=\frac{%
\left\vert f^{\prime }\left( mb\right) \right\vert ^{q}+\alpha m\left\vert
f^{\prime }\left( C\right) \right\vert ^{q}}{\alpha +1},  \label{2-3c}
\end{equation}%
and%
\begin{equation}
\dint\limits_{0}^{1}\left\vert t-\theta \right\vert ^{p}dt=\frac{\theta
^{p+1}+\left( 1-\theta \right) ^{p+1}}{p+1}  \label{2-3d}
\end{equation}%
thus, using (\ref{2-3b})-(\ref{2-3d}) in (\ref{2-3a}), we obtain the
following inequality%
\begin{eqnarray}
&&\left\vert \left( 1-\theta \right) \left( \lambda f(ma)+\left( 1-\lambda
\right) f(mb)\right) +\theta f(mC)-\frac{1}{m\left( b-a\right) }%
\dint\limits_{ma}^{mb}f(x)dx\right\vert  \notag \\
&\leq &m\left( b-a\right) \left( \frac{\theta ^{p+1}+\left( 1-\theta \right)
^{p+1}}{p+1}\right) ^{\frac{1}{p}}\left\{ \lambda ^{2}\left( \frac{%
\left\vert f^{\prime }\left( ma\right) \right\vert ^{q}+\alpha m\left\vert
f^{\prime }\left( C\right) \right\vert ^{q}}{\alpha +1}\right) ^{\frac{1}{q}%
}\right.  \notag \\
&&\left. \left( 1-\lambda \right) ^{2}\left( \frac{\left\vert f^{\prime
}\left( mb\right) \right\vert ^{q}+\alpha m\left\vert f^{\prime }\left(
C\right) \right\vert ^{q}}{\alpha +1}\right) ^{\frac{1}{q}}\right\} .
\label{2-3e}
\end{eqnarray}%
Similarly%
\begin{eqnarray}
\dint\limits_{0}^{1}\left\vert f^{\prime }\left( tma+\left( 1-t\right)
mC\right) \right\vert ^{q}dt &=&\dint\limits_{0}^{1}\left\vert f^{\prime
}\left( tmC+\left( 1-t\right) ma\right) \right\vert ^{q}dt  \notag \\
&\leq &\dint\limits_{0}^{1}t^{\alpha }\left\vert f^{\prime }(mC)\right\vert
^{q}+m(1-t^{\alpha })\left\vert f^{\prime }(a)\right\vert ^{q}dt  \notag \\
&=&\frac{\left\vert f^{\prime }\left( mC\right) \right\vert ^{q}+\alpha
m\left\vert f^{\prime }\left( a\right) \right\vert ^{q}}{\alpha +1}
\label{2-3f}
\end{eqnarray}%
and%
\begin{eqnarray}
\dint\limits_{0}^{1}\left\vert f^{\prime }\left( tmb+\left( 1-t\right)
mC\right) \right\vert ^{q}dt &=&\dint\limits_{0}^{1}\left\vert f^{\prime
}\left( tmC+\left( 1-t\right) mb\right) \right\vert ^{q}dt  \notag \\
&\leq &\dint\limits_{0}^{1}t^{\alpha }\left\vert f^{\prime }(mC)\right\vert
^{q}+m(1-t^{\alpha })\left\vert f^{\prime }(b)\right\vert ^{q}dt  \notag \\
&=&\frac{\left\vert f^{\prime }\left( mC\right) \right\vert ^{q}+\alpha
m\left\vert f^{\prime }\left( b\right) \right\vert ^{q}}{\alpha +1}.
\label{2-3g}
\end{eqnarray}
By using (\ref{2-3b}), (\ref{2-3f}) and (\ref{2-3g} )in (\ref{2-3a}), we get
the following inequality%
\begin{eqnarray}
&&\left\vert \left( 1-\theta \right) \left( \lambda f(ma)+\left( 1-\lambda
\right) f(mb)\right) +\theta f(mC)-\frac{1}{m\left( b-a\right) }%
\dint\limits_{ma}^{mb}f(x)dx\right\vert  \notag \\
&\leq &m\left( b-a\right) \left( \frac{\theta ^{p+1}+\left( 1-\theta \right)
^{p+1}}{p+1}\right) ^{\frac{1}{p}}\left\{ \lambda ^{2}\left( \frac{%
\left\vert f^{\prime }\left( mC\right) \right\vert ^{q}+\alpha m\left\vert
f^{\prime }\left( a\right) \right\vert ^{q}}{\alpha +1}\right) ^{\frac{1}{q}%
}\right.  \notag \\
&&\left. \left( 1-\lambda \right) ^{2}\left( \frac{\left\vert f^{\prime
}\left( mC\right) \right\vert ^{q}+\alpha m\left\vert f^{\prime }\left(
b\right) \right\vert ^{q}}{\alpha +1}\right) ^{\frac{1}{q}}\right\} .
\label{2-3h}
\end{eqnarray}%
From the inequalities (\ref{2-3e}) and (\ref{2-3h}) \ the inequality (\ref%
{2-3}) is obtained.This completes the proof.
\end{proof}

\begin{corollary}
\label{2.5}Under the assumptions of Theorem \ref{2.4} with $\lambda =\frac{1%
}{2}$ and $\theta =\frac{2}{3}$, we have%
\begin{eqnarray*}
&&\left\vert \frac{1}{6}\left[ f(ma)+4f\left( \frac{ma+mb}{2}\right) +f(mb)%
\right] -\frac{1}{m\left( b-a\right) }\dint\limits_{ma}^{mb}f(x)dx\right\vert
\\
&\leq &\frac{m\left( b-a\right) }{12}\left( \frac{2^{p+1}+1}{3\left(
p+1\right) }\right) ^{\frac{1}{p}}\min \left\{ E_{1}^{\frac{1}{q}}(\frac{1}{2%
},\alpha )+E_{2}^{\frac{1}{q}}(\frac{1}{2},\alpha ),E_{3}^{\frac{1}{q}}(%
\frac{1}{2},\alpha )+E_{4}^{\frac{1}{q}}(\frac{1}{2},\alpha )\right\} ,
\end{eqnarray*}%
where%
\begin{eqnarray*}
E_{1}(\frac{1}{2},\alpha ) &=&\frac{\left\vert f^{\prime }\left( ma\right)
\right\vert ^{q}+\alpha m\left\vert f^{\prime }\left( \frac{a+b}{2}\right)
\right\vert ^{q}}{\alpha +1}, \\
E_{2}(\frac{1}{2},\alpha ) &=&\frac{\left\vert f^{\prime }\left( mb\right)
\right\vert ^{q}+\alpha m\left\vert f^{\prime }\left( \frac{a+b}{2}\right)
\right\vert ^{q}}{\alpha +1}, \\
E_{3}(\frac{1}{2},\alpha ) &=&\frac{\left\vert f^{\prime }\left( \frac{%
m\left( a+b\right) }{2}\right) \right\vert ^{q}+\alpha m\left\vert f^{\prime
}\left( a\right) \right\vert ^{q}}{\alpha +1}, \\
E_{4}(\frac{1}{2},\alpha ) &=&\frac{\left\vert f^{\prime }\left( \frac{%
m\left( a+b\right) }{2}\right) \right\vert ^{q}+\alpha m\left\vert f^{\prime
}\left( b\right) \right\vert ^{q}}{\alpha +1},
\end{eqnarray*}
\end{corollary}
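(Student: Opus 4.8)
The plan is to derive the claimed inequality by directly specializing Theorem \ref{2.4} to the parameter values $\lambda = \frac{1}{2}$ and $\theta = \frac{2}{3}$; no new estimation is required, only the evaluation of the constants produced by these choices.

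First I would substitute $\lambda = \frac{1}{2}$. This gives $C = (1-\lambda)a + \lambda b = \frac{a+b}{2}$, so that $mC = \frac{ma+mb}{2}$ and $f(mC) = f\!\left(\frac{ma+mb}{2}\right)$. The linear combination on the left-hand side of Theorem \ref{2.4} then becomes
\begin{equation*}
(1-\theta)\left(\tfrac{1}{2}f(ma)+\tfrac{1}{2}f(mb)\right)+\theta f\!\left(\tfrac{ma+mb}{2}\right),
\end{equation*}
and inserting $\theta = \frac{2}{3}$ (so that $1-\theta = \frac{1}{3}$) collapses this expression to $\frac{1}{6}\left[f(ma)+4f\!\left(\frac{ma+mb}{2}\right)+f(mb)\right]$, which is exactly the Simpson-type term on the left of the corollary.

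Next I would evaluate the two scalar factors on the right. Since $\lambda^2 = (1-\lambda)^2 = \frac{1}{4}$, the quantities $B_3$ and $B_4$ reduce to $\frac{1}{4}\bigl(E_1^{1/q}+E_2^{1/q}\bigr)$ and $\frac{1}{4}\bigl(E_3^{1/q}+E_4^{1/q}\bigr)$ respectively, and because $\frac{1}{4}>0$ the minimum factors out this common coefficient. The only genuinely computational step is the power-weight constant: with $\theta = \frac{2}{3}$ one has
\begin{equation*}
\left(\frac{\theta^{p+1}+(1-\theta)^{p+1}}{p+1}\right)^{1/p}
=\left(\frac{2^{p+1}+1}{3^{p+1}(p+1)}\right)^{1/p}
=\frac{1}{3}\left(\frac{2^{p+1}+1}{3(p+1)}\right)^{1/p},
\end{equation*}
where the last equality uses $3^{(p+1)/p}=3\cdot 3^{1/p}$. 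Multiplying the prefactor $m(b-a)$ by this leading $\frac{1}{3}$ and by the $\frac{1}{4}$ extracted above yields the asserted constant $\frac{m(b-a)}{12}$ together with the displayed power factor.

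Finally I would record the specialized weights $E_i$ by setting $\lambda=\frac{1}{2}$ (hence $C=\frac{a+b}{2}$ and $mC=\frac{m(a+b)}{2}$) in the definitions of $E_1,\dots,E_4$ from Theorem \ref{2.4}, which immediately produces the four displayed formulas. The single place demanding care is the simplification of the power-mean constant, since an incorrectly tracked exponent on $3$ would distort the final coefficient $\frac{1}{12}$; every other step is routine substitution.
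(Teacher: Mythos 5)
Your proposal is correct and is exactly the argument the paper intends (the corollary is stated without proof, as an immediate specialization of Theorem \ref{2.4}): substituting $\lambda=\frac{1}{2}$, $\theta=\frac{2}{3}$ collapses the left side to the Simpson term, and your simplification $\left(\frac{(2/3)^{p+1}+(1/3)^{p+1}}{p+1}\right)^{1/p}=\frac{1}{3}\left(\frac{2^{p+1}+1}{3(p+1)}\right)^{1/p}$ together with the common factor $\lambda^{2}=(1-\lambda)^{2}=\frac{1}{4}$ pulled out of the minimum yields the coefficient $\frac{m(b-a)}{12}$ exactly as claimed. All computations check out, including the specialized forms of $E_{1},\dots,E_{4}$.
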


\begin{remark}
In Corollary \ref{2.5}, if we take $\alpha =m=1,$ then we obtain the
following inequality%
\begin{eqnarray*}
&&\left\vert \frac{1}{6}\left[ f(a)+4f\left( \frac{a+b}{2}\right) +f(b)%
\right] -\frac{1}{b-a}\dint\limits_{a}^{b}f(x)dx\right\vert \leq \left( 
\frac{b-a}{12}\right) \left( \frac{1+2^{p+1}}{3\left( p+1\right) }\right) ^{%
\frac{1}{p}} \\
&&\times 2.\min \left\{ \left( \frac{\left\vert f^{\prime }\left( \frac{a+b}{%
2}\right) \right\vert ^{q}+\left\vert f^{\prime }\left( a\right) \right\vert
^{q}}{2}\right) ^{\frac{1}{q}},\left( \frac{\left\vert f^{\prime }\left( 
\frac{a+b}{2}\right) \right\vert ^{q}+\left\vert f^{\prime }\left( b\right)
\right\vert ^{q}}{2}\right) ^{\frac{1}{q}}\right\} ,
\end{eqnarray*}%
which is the better than the inequality in \cite[Corollary 3]{SSO10}
\end{remark}

\begin{corollary}
Under the assumptions of Theorem \ref{2.4} with $\lambda =\frac{1}{2}$ and $%
\theta =0$, we have%
\begin{eqnarray*}
&&\left\vert \frac{f(ma)+f(mb)}{2}-\frac{1}{m\left( b-a\right) }%
\dint\limits_{ma}^{mb}f(x)dx\right\vert \\
&\leq &\frac{m\left( b-a\right) }{4}\left( \frac{1}{p+1}\right) ^{\frac{1}{p}%
}\min \left\{ E_{1}^{\frac{1}{q}}(\frac{1}{2},\alpha )+E_{2}^{\frac{1}{q}}(%
\frac{1}{2},\alpha ),E_{3}^{\frac{1}{q}}(\frac{1}{2},\alpha )+E_{4}^{\frac{1%
}{q}}(\frac{1}{2},\alpha )\right\} .
\end{eqnarray*}
\end{corollary}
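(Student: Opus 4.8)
The plan is to obtain this inequality by specializing Theorem \ref{2.4} to the parameter values $\theta = 0$ and $\lambda = \tfrac{1}{2}$, so no new estimate is needed --- only the bookkeeping of plugging in these constants. First I would record that with $\lambda = \tfrac12$ the interior node becomes $C = (1-\lambda)a + \lambda b = \tfrac{a+b}{2}$, so that $mC = \tfrac{ma+mb}{2}$ and $f(mC) = f\!\left(\tfrac{ma+mb}{2}\right)$; this pins down the arguments appearing in $E_1,\dots,E_4$ once they are evaluated at $\lambda = \tfrac12$.

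Next I would simplify the left-hand side. Setting $\theta = 0$ makes $1-\theta = 1$ and annihilates the term $\theta f(mC)$, while $\lambda = \tfrac12$ turns $\lambda f(ma) + (1-\lambda)f(mb)$ into $\tfrac{f(ma)+f(mb)}{2}$. Hence the bracketed functional reduces exactly to $\tfrac{f(ma)+f(mb)}{2} - \tfrac{1}{m(b-a)}\int_{ma}^{mb} f(x)\,dx$, which matches the left side of the claim.

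For the right-hand side I would treat the two factors separately. The constant $\left(\tfrac{\theta^{p+1} + (1-\theta)^{p+1}}{p+1}\right)^{1/p}$ collapses, at $\theta = 0$, to $\left(\tfrac{1}{p+1}\right)^{1/p}$. In the quantities $B_3$ and $B_4$ the weights $\lambda^2$ and $(1-\lambda)^2$ both equal $\tfrac14$ when $\lambda = \tfrac12$, so $B_3\!\left(\tfrac12,\alpha,q\right) = \tfrac14\big(E_1^{1/q}(\tfrac12,\alpha)+E_2^{1/q}(\tfrac12,\alpha)\big)$ and likewise $B_4\!\left(\tfrac12,\alpha,q\right) = \tfrac14\big(E_3^{1/q}(\tfrac12,\alpha)+E_4^{1/q}(\tfrac12,\alpha)\big)$. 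Pulling the common factor $\tfrac14$ out of the minimum and absorbing it into the leading $m(b-a)$ yields $\tfrac{m(b-a)}{4}\left(\tfrac{1}{p+1}\right)^{1/p}\min\{\,E_1^{1/q}+E_2^{1/q},\,E_3^{1/q}+E_4^{1/q}\,\}$, which is precisely the stated bound.

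Since every step is a substitution into an already-proved inequality, there is no genuine analytic obstacle here; the only points requiring care are the factoring of $\tfrac14$ out of the minimum --- legitimate because $\tfrac14$ is a positive constant independent of which branch attains the minimum --- and correctly tracking the arguments of $E_1,\dots,E_4$ after setting $\lambda = \tfrac12$.
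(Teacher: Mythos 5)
Your proof is correct and takes exactly the route the paper intends: the corollary is stated without separate proof because it is an immediate specialization of Theorem \ref{2.4} at $\theta =0$, $\lambda =\frac{1}{2}$. All your bookkeeping checks out --- $C=\frac{a+b}{2}$, the constant collapsing to $\left( \frac{1}{p+1}\right) ^{\frac{1}{p}}$, and the factor $\frac{1}{4}=\lambda ^{2}=\left( 1-\lambda \right) ^{2}$ legitimately pulled out of the minimum.
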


\begin{corollary}
Under the assumptions of Theorem \ref{2.4} with $\lambda =\frac{1}{2}$ and $%
\theta =1$, we have%
\begin{eqnarray*}
&&\left\vert f\left( \frac{m\left( a+b\right) }{2}\right) -\frac{1}{m\left(
b-a\right) }\dint\limits_{ma}^{mb}f(x)dx\right\vert \\
&\leq &\frac{m\left( b-a\right) }{4}\left( \frac{1}{p+1}\right) ^{\frac{1}{p}%
}\min \left\{ E_{1}^{\frac{1}{q}}(\frac{1}{2},\alpha )+E_{2}^{\frac{1}{q}}(%
\frac{1}{2},\alpha ),E_{3}^{\frac{1}{q}}(\frac{1}{2},\alpha )+E_{4}^{\frac{1%
}{q}}(\frac{1}{2},\alpha )\right\} .
\end{eqnarray*}
\end{corollary}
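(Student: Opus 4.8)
The plan is to obtain this corollary as a direct specialization of Theorem \ref{2.4}, substituting $\lambda=\frac{1}{2}$ and $\theta=1$ and then simplifying each factor on both sides. No new estimate is needed; the entire task is careful bookkeeping of the substitution, exactly as in the preceding corollaries (e.g.\ Corollary \ref{2.5}, where $\theta=\frac{2}{3}$ is used instead).

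First I would handle the left-hand side. Putting $\theta=1$ annihilates the weight $1-\theta$ multiplying the endpoint terms $\lambda f(ma)+(1-\lambda)f(mb)$, leaving only $\theta f(mC)=f(mC)$. With $\lambda=\frac{1}{2}$ we have $C=(1-\lambda)a+\lambda b=\frac{a+b}{2}$, hence $mC=\frac{m(a+b)}{2}$, and the left-hand side of Theorem \ref{2.4} collapses to exactly $\bigl|f\bigl(\frac{m(a+b)}{2}\bigr)-\frac{1}{m(b-a)}\int_{ma}^{mb}f(x)\,dx\bigr|$, which is the quantity in the statement.

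Next I would simplify the two factors on the right. The $\theta$-dependent constant becomes $\bigl(\frac{\theta^{p+1}+(1-\theta)^{p+1}}{p+1}\bigr)^{1/p}=\bigl(\frac{1}{p+1}\bigr)^{1/p}$ at $\theta=1$. For the minimum, substituting $\lambda=\frac{1}{2}$ into $B_3$ and $B_4$ gives $\lambda^2=(1-\lambda)^2=\frac{1}{4}$, so that $B_3=\frac{1}{4}\bigl(E_1^{1/q}+E_2^{1/q}\bigr)$ and $B_4=\frac{1}{4}\bigl(E_3^{1/q}+E_4^{1/q}\bigr)$, where each $E_i$ is evaluated at $\lambda=\frac{1}{2}$ (so that $C=\frac{a+b}{2}$ and $mC=\frac{m(a+b)}{2}$). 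Pulling the common factor $\frac{1}{4}$ outside the minimum and absorbing it into the leading $m(b-a)$ yields the prefactor $\frac{m(b-a)}{4}$ of the statement.

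Assembling the three pieces gives precisely the claimed inequality, since every simplification above is an exact identity and the $\min$ operation commutes with multiplication by the positive scalar $\frac{1}{4}$. There is no genuine obstacle: the only point meriting a moment's attention is checking that the factor $\lambda^2=(1-\lambda)^2=\frac{1}{4}$ pulls uniformly out of both $B_3$ and $B_4$, which it does because each of the two summands inside every $B_i$ carries the same $\frac{1}{4}$, allowing it to be cleanly absorbed into the prefactor.
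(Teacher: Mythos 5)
Your proposal is correct and matches the paper's intent exactly: the paper states this corollary without proof as a direct specialization of Theorem \ref{2.4}, and your substitution of $\theta=1$ and $\lambda=\frac{1}{2}$ (killing the endpoint terms, reducing the constant to $\left(\frac{1}{p+1}\right)^{\frac{1}{p}}$, and factoring $\lambda^{2}=(1-\lambda)^{2}=\frac{1}{4}$ out of both $B_{3}$ and $B_{4}$) is precisely the computation the author leaves to the reader. Nothing is missing.
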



\begin{thebibliography}{99}
\bibitem{BOP08} Bakula, M.K., Ozdemir, M.E. and Pecaric,J.: \textit{Hadamard
type inequalities for }$m$\textit{-convex and }$(\alpha ,m)$\textit{-convex
functions}, J. Inequal. Pure Appl. Math. 9, no.4, Article 96, p. 12, 2008.
[Online: http://jipam.vu.edu.au].

\bibitem{M93} Mihe\c{s}an, V.G.: \textit{A generalization of the convexity, }%
Seminer on Functional Equations, Approximation and Convexity, Cluj-Napoca,
Romania (1993).

\bibitem{OAK11} Ozdemir, M.E., Avc\i , M. and Kavurmac\i , H.: \textit{%
Hermite-Hadamard-type inequalities via }$(\alpha ,m)$\textit{-convexity, }%
Computers and Mathematics with Applications 61, 2614-2620, 2011.

\bibitem{OKS10} Ozdemir, M.E., Kavurmac\i , H. and Set, E.:\ \textit{%
Ostrowski's type inequalities for }$(\alpha ,m)$\textit{-convex functions},
Kyungpook Math. J. 50, 371-378, 2010.

\bibitem{OSS11} Ozdemir, M.E., Set, E .and Sar\i kaya, M.Z.: \textit{Some
new Hadamard's type inequalities for co-ordinated }$m$\textit{-convex and }$%
(\alpha ,m)$\textit{-convex functions,} Hacettepe Journal of Mathematics and
Statistics volume 40 (2), 219-229, 2011.

\bibitem{P12} J. Park,\textit{\ Hermite-Hadamard and Simpson-like type
inequalities for differentiable }$(\alpha ,m)$\textit{-convex mappings,}
Int. Journal of Math. and Mathematical Sciences, vol. 2012, Article ID
809689, 12 pages.

\bibitem{SA11} Sarikaya, M.Z., Aktan, N.: \textit{On the generalization of
some integral inequalities and their applications}, Mathematical and
Computer Modelling, 54 (2011) 2175-2182.

\bibitem{SSO10} Sarikaya, M.Z., Set, E., \"{O}zdemir, M.E.: \textit{On new
inequalities of Simpson's type for }$s-$\textit{convex functions}, Computers
and Mathematics with Applications 60 (2010) 2191-2199.

\bibitem{SSO10a} Sarikaya, M.Z., Set, E., \"{O}zdemir, M.E.: \textit{On new
inequalities of Simpson's type for convex functions,} RGMIA Res. Rep. Coll.
13 (2) (2010) Article 2.

\bibitem{SSOR09} Set,E., Sardari, M., Ozdemir, M.E. and Rooin, J.: \textit{%
On generalizations of the Hadamard inequality for }$(\alpha ,m)$\textit{%
-convex functions,} RGMIA Res. Rep. Coll. 12(4), Article 4, 2009.
\end{thebibliography}
\end{document}